\begin{document}

\author[A.\,F.\,Boix]{Alberto F.\,Boix}

\address{Department of Mathematics, Ben-Gurion University of the Negev, P.O.B. 653 Beer-Sheva 8410501, Israel.}
\email{fernanal@post.bgu.ac.il}
\thanks{A.F.B. was supported by Israel Science Foundation (grant No. 844/14) and Spanish Ministerio de Econom\'ia y Competitividad MTM2016-7881-P}

\author[M.\,P.\,Noordman]{Marc Paul Noordman}
\address{Bernoulli Institute, University of Groningen, P.O. Box 407, 9700 AG Groningen, The Netherlands.}
\email{m.p.noordman@rug.nl}



\author[J.\,Top]{Jaap Top}

\address{Bernoulli Institute, University of Groningen, P.O. Box 407, 9700 AG Groningen, The Netherlands.}
\email{j.top@rug.nl}

\date{\today}

\title[The level of pairs of polynomials]{The level of pairs of polynomials}

\keywords{First order differential equation, Differential operators, Frobenius map, Prime characteristic, supersingular curve, ordinary curve}

\subjclass[2010]{Primary 13A35; Secondary 13N10, 14B05, 14F10, 34M15}

\maketitle

\begin{abstract}
Given a polynomial $f$ with coefficients in a field of prime characteristic $p$, 
it is known that there exists a differential operator that raises $1/f$ to its $p$th power. We first discuss a relation
between the `level' of this differential operator and the
notion of `stratification' in
the case of hyperelliptic curves. 

Next we extend the notion of
level to that of a pair of
polynomials.
We prove some basic properties 
and we compute this level in certain special cases. 
In particular we present
examples of polynomials $g$ and $f$ such that there is no differential
operator raising $g/f$ to its $p$th power.
\end{abstract}

\section{Introduction}

Let $k$ be any perfect field and $R=k[x_1,...,x_d]$ its polynomial ring in $d$ variables. In this case it is known \cite[IV, Th\'eor\`eme 16.11.2]{EGAIV} that the ring $\DD$ of $k$--linear differential operators on $R$ is the $R$-algebra (which we take here as a definition)
\[
\DD:=R \left\langle D_{x_i,t} \mid i=1,\ldots,d \mbox{ and } t\geq 1 \right\rangle\subseteq\mathrm{End}_{k}(R), 
\]
generated by the operators $D_{x_i,t}$, defined as
\[
D_{x_i,t}(x_j^s)=\begin{cases}
\binom{s}{t}x_i^{s-t},\text{ if }i=j\text{ and }s\geq t,\\
0,\text{ otherwise }.\end{cases}
\]
For a non-zero $f\in R$, let $R_f$ be the localization of $R$ at $f$; the natural action of $\DD$ on $R$ extends to $R_f$ in such a way that $R_f=\DD\frac{1}{f^m}$, for some $m\geq 1$. Whilst there are examples of $m>1$ in characteristic $0$ (e.g. \cite[Example 23.13]{Twentyfourhours}), in positive characteristic one may always take $m=1$ (\cite[Theorem 3.7 and Corollary 3.8]{AlvarezBlickleLyubeznik2005}). This is shown by proving the existence of a differential operator $\delta\in\DD$ such that $\delta(1/f)=1/f^p$, i.e., $\delta$ acts as Frobenius on $1/f$. We want to mention here that the existence of this differential operator was used as key ingredient in \cite{BBLSZ14} to prove that local cohomology modules over smooth $\Z$--algebras have finitely many associated primes. On the other hand, the fact that $R_f$ is generated by $1/f$ as $\DD$--module remains valid for more
general classes of rings $R$: the interested reader may consult \cite[Theorems 4.1 and 5.1]{AlvarezBlickleLyubeznik2005}, \cite[Theorem 3.1]{Hsiao2012}, \cite[Corollary 2.10 and Remark 2.11]{TakTak08} and \cite[Theorem 4.4]{AlvarezMontanerHunekeBetancourt2017} for details. We will suppose that $k=\mathbb{F}_p$ and fix an algebraic closure $\overline{k}$ of $k$ from now on.

For an integer $e\geq 0$, let $R^{p^e}\subseteq R$ be the subring of all the $p^e$ powers of all the elements of $R$ and
set $\DD^{(e)}:=\End_{R^{p^e}} (R)$, the ring of $R^{p^e}$-linear ring-endomorphism of $R$. Since $R$ is a finitely generated $R^p$-module, by \cite[1.4.8 and 1.4.9]{Yekutiely1992}, it is 
\[
\DD=\bigcup_{e\geq 0}\DD^{(e)}.
\label{filter0}
\]
Therefore, for $\delta\in\DD$, there exists $e\geq 0$ such that $\delta\in \DD^{(e)}$ but $\delta\not\in \DD^{(e')}$ for any $e'<e$. This number $e$ is called the level of $\delta$. For a polynomial $f$, the level is defined
as the lowest level of an operator $\delta$ such that $\delta(1/f)=1/f^p$.

The level of a polynomial has been studied in \cite{AlvarezBlickleLyubeznik2005} and \cite{BoixDeStefaniVanzo2015}. In \cite{BoixDeStefaniVanzo2015}, an algorithm is given to compute the level and a number of examples are exhibited. Moreover, if $f$ is a cubic smooth homogeneous polynomial defining an elliptic curve $\mathcal{C}=V(f)=\{(x:y:z)\in \mathbb{P}^2_k: f(x,y,z)=0\},$ then the level of $f$ characterizes the supersingularity of $\mathcal{C}$ in the following way:

\begin{teo}(\cite[Theorem 1.1]{BoixDeStefaniVanzo2015})
Let $f\in R$ be a cubic homogeneous polynomial such that $\mathcal{C}=V(f)$ is an elliptic curve over $k$. Denote by $e$ the level of $f$. Then
\begin{enumerate}[(i)]
\item $\mathcal{C}$ is ordinary if and only if $e=1$.
\item $\mathcal{C}$ is supersingular if and only if $e=2$.
\end{enumerate}
\label{intro1}
\end{teo}

This result was generalized for hyperelliptic curves of arbitrary genus
$g\geq 2$; indeed, let $\mathcal{C}:=\{(x:y:z)\in \mathbb{P}^2_k: f(x,y,z)=0\}$, where $f$ is a homogeneous polynomial of degree $2g+1$ defined over $k$. If
$\mathrm{Jac}(\mathcal{C})$ denotes its Jacobian, then it is well known \cite[Proposition of page 60]{MumfordAbelianbook} that, for any integer $n>0$,
\[
\mathrm{Jac}(\mathcal C)[n](\overline k)=\begin{cases}
(\mathbb{Z}/n\mathbb{Z})^{2g} \text{  if } \mathrm{char}(k)\nmid n,\\
(\mathbb{Z}/p^m\mathbb{Z})^i \text{  if }n=p^m,\ p=\mathrm{char}(k)\text{  and }m>0,
\end{cases}
\]
where $i$ can take every value in the range $0\le i \le g$, and is called the
$p$-rank of $\mathcal C$. For the convenience of the reader, we recall here the following standard terminology:

\begin{df}The curve $\mathcal{C}$ is said to be ordinary if its
$p$-rank is maximal, i.e., equal to the genus of $\mathcal{C}$. The curve $\mathcal{C}$ is said to be supersingular (resp.\
superspecial) if $\mathrm{Jac}(\mathcal{C})$ is isogenous (resp.\ isomorphic)
over $\overline{k}$ to the product of $g$ supersingular elliptic curves.
If $\mathcal{C}$ is supersingular then the $p$-rank of $\mathcal{C}$ equals $0$,
however the converse of this statement does not hold.
\end{df}
The generalization of Theorem \ref{intro1} reads as follows
\cite[Theorems 1.3, 3.5 and 3.9]{BlancoBoixFordhamYilmaz2018}:

\begin{teo}\label{intro2}
Let $f\in R$ be a homogeneous polynomial in three variables and of degree $2g+1$, such that $\mathcal{C}\cong V(f)\subset \mathbb{P}^2$ defines a hyperelliptic curve over $\overline{k}$ of genus $g$. Denote by $e$ the level of $f$. Assume $p>2g^2-1$. Then
\begin{itemize}
        \item[(i)] $e=2$ if $\mathcal C$ is ordinary,
        \item[(ii)] $e>2$ if $\mathcal C$ is supersingular but not
          superspecial.
    \end{itemize}
\end{teo}
We also want to mention here that the level of a polynomial $f$ 
is closely related to the so--called Hartshorne--Speiser--Lyubeznik--Gabber number of the pair $(R,f)$, and that this number can be explicitly calculated using Macaulay2, see \cite[\S 4.4]{Boixet9authorsM2} for further information. On the other hand, one can also calculate the level of $f$ in terms of $F$--jumping numbers \cite[Proposition 6]{Fordham18}.

The goal of this paper is to introduce and study the level of a pair
of polynomials. Given $f,g$ polynomials
defined over $\mathbb{F}_p$, one may ask whether there is a differential operator
$\delta \in \mathcal D_R$ mapping $g/f$ to $(g/f)^p$. Such an operator exists when $g=1$ by \cite[Theorem 3.7 and Corollary 3.8]{AlvarezBlickleLyubeznik2005}, and
more generally, when $f$ itself has level one, as pointed out in
\cite{BoixDeStefaniVanzo2015}. Keeping in mind all of this, it seems natural to define the level
of $g$ and $f$ as
\[
\level (g,f):=\inf\{e\geq 0:\ \exists\delta\in
\mathcal D^{(e)}\text{ such that }\delta(g/f)=(g/f)^p\}.
\]
As we already mentioned, our goal in this paper is to study this notion, and to calculate it in several interesting examples. 

Part of our motivation for introducing it comes from \cite{Singh2017}, where the author gave a conceptual proof of a polynomial identity obtained in \cite[Lemma 3.1]{Singh2000} using hypergeometric series algorithms.
This polynomial identity, and the corresponding results obtained by Singh concerning associated primes of local cohomology modules \cite{Singh2000} were the basis of \cite{LyubeznikSinghWaltherdeterminantal}, where the authors proved, among other remarkable results, that local cohomology modules $H_{I_t (X)}^ k (\Z[X])$ are rational vector spaces for any $k>height (I_t(X))$, where $X$ is a matrix of indeterminates, and $I_t (X)$ is the ideal of size $t$ minors of this matrix \cite[Theorem 1.2]{LyubeznikSinghWaltherdeterminantal}. 
The proof presented in \cite{Singh2017} used as key ingredient certain differential operators defined over the integers that, modulo a prime $p$, act as the Frobenius endomorphism on quotients of polynomials \cite[page 244]{Singh2017}. 

Another motivation comes from \cite{BrennerNBJdifferential}, where the authors use higher order differential operators to measure various kind of singularities in all characteristics.
These higher order operators also play a key role in recent developments in the study of symbolic powers of ideals (see \cite{DeStefaniGrifoJeffries2018} and \cite[Section 10]{BrennerNBJdifferential} for details). We hope that the calculation of the level of a pair of polynomials might help in the understanding of these differential operators. 
The interplay between differential operators over the integers and their reduction modulo a prime $p$ (which is a delicate issue, see \cite[Section 6]{Jeffries2019} for details) was a key technical ingredient to prove in \cite[Theorem 3.1]{BBLSZ14} that local cohomology modules over $\Z$ can have $p$--torsion for at most finitely many primes $p.$

Now, we provide a more detailed overview of the contents of this
manuscript for the convenience of the reader; first of all, in
Section \ref{stratification and level section} we give some
connection between being stratified for a nonlinear differential
equation and the level of a polynomial in the case of
hyperelliptic curves. Second, in Section
\ref{the level of a pair: basic properties}, we formally define
the level of a pair of polynomials, listing some of the properties
it satisfies. In Section \ref{some concrete calculations
of the level of a pair}, we focus on specific calculations when
$f$ and $g$ are both homogeneous polynomials; in particular, we will
show, among other things, that $\level (g,f)$ is, in general, not
finite (see Proposition \ref{maybe of infinite level}). We end this paper by raising some open questions to stimulate
further research on this subject.
\section*{Acknowledgement}
This research started when the first named author visited the university of Groningen in the
Fall of 2017. We thank Marius van der Put for valuable discussions and for his interest in this work.

\section{Stratified differential equations and hyperelliptic curves}\label{stratification and level section}
The notion of stratification for nonlinear differential
equations was introduced in \cite{VanderPutTop2015}; we briefly recall
it here. Let
$C\supseteq\mathbb{F}_p$ be an algebraically closed field, let
$C(z)$ be the one variable differential field extension
of $C$ with derivation $\frac{d}{dz}$ and let $K$ be a finite separable extension of $C(z)$. Consider
the differential equation $f(y',y)=0$, where $f\in K[S,T]$ is an
absolutely irreducible polynomial such that the image $d$ of
$df/dS$ in $K[S,T]/(f)$ is nonzero; the differential algebra
$A:=K[y',y,1/d]$ is given by the derivation $D$ with $D(z)=1$ and
$D(y)=y'$. One says that $f(y',y)=0$ is \textbf{stratified} if and
only if $D^p=0$ \cite[Theorem 1.1]{VanderPutTop2015}; it was also
proved in \cite[Proposition 2.3]{VanderPutTop2015} that, if
$p\geq 3$ and $f$ is the defining equation of an elliptic curve $E$, then
$f(y',y)=0$ is stratified if and only if $E$ is supersingular, which is
equivalent to say, by Theorem \ref{intro1}, that the
homogeneous polynomial corresponding to $f$ has level
two. Keeping in mind these characterizations, one may ask what is the
connection between being stratified and the level of a polynomial. For this we will use the next technical result,
involving among other notions the $a$-number of (the Jacobian variety of) a curve $X$ of genus $g$.
This number equals the dimension of the kernel of the
Cartier-Manin matrix associated to $X$. Many properties
of it are discussed in the textbook \cite{LiOort1998};
the $p$-rank $f_X$ and the $a$-number $a_X$
satisfy $f_X+a_X\leq g$.
Here equality does not hold in general, but
$a_X=0\Leftrightarrow f_X=g\Leftrightarrow X\;\text{is ordinary}$, and $a_X=g\Leftrightarrow X\;\text{is superspecial}$ (see \cite[Theorem 2]{Oort1975} and \cite[Theorem 4.1]{Nygaard1981} for the latter).

\begin{prop}\label{the autonomous hyperelliptic equation}
Given an algebraically closed field $k$ of prime characteristic $p\geq 3,$ consider the hyperelliptic curve
$\mathcal{H}$ of genus $g\geq 1$ defined by the equation
$y^2=h(x),$ where $h(x)\in k[x]$ is squarefree
and has degree $2g+1.$ 
The following statements are equivalent.
\begin{enumerate}[(i)]

\item $\mathcal{H}$ is not ordinary.

\item There exist $a_0,a_1,\ldots, a_{g-1}\in k$ with $a_j\neq 0$ for at least one $j$,
such that 
the differential equation
\[
(x')^2=\frac{h(x)}{(a_{g-1}x^{g-1}+\ldots+a_1 x+a_0)^2}
\]
is stratified.

\item The $a$-number of the Jacobian of $\mathcal{H}$ 
is not zero.
\end{enumerate}
\end{prop}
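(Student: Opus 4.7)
The equivalence (i) $\Leftrightarrow$ (iii) is immediate from the facts recalled just before the statement: for any smooth curve $X$ of genus $g$ one has $a_X = 0 \iff f_X = g \iff X$ is ordinary. The real content is therefore the equivalence (ii) $\Leftrightarrow$ (iii).

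Write $Q(x) := a_{g-1}x^{g-1}+\cdots+a_0$. Setting $y:=x'\cdot Q(x)$, the equation $(x')^2 = h(x)/Q(x)^2$ becomes $y^2 = h(x)$, so the differential algebra $A = k(z)[x',x,1/d]$ of the Van der Put--Top framework contains (a localisation of) the affine coordinate ring of the hyperelliptic curve $\mathcal{H}$. The derivation $D$ on $A$ satisfies $D(x) = y/Q(x)$ and, by differentiating $y^2 = h(x)$, $D(y) = h'(x)/(2Q(x))$; in particular it restricts to a $k$-derivation $\tilde D$ of the function field $k(\mathcal{H})$. Since $D$ acts on $k(z)$ as $d/dz$, whose $p$-th iterate is zero, the stratification condition $D^p = 0$ on $A$ reduces to $\tilde D^p = 0$ on $k(\mathcal{H})$.

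By construction, $\tilde D$ is dual to the regular differential $\omega_Q := Q(x)\,dx/y \in H^0(\mathcal{H},\Omega^1)$, in the sense that $\omega_Q(\tilde D) = 1$. The argument then rests on the following classical fact: for a nonzero $k$-derivation $\delta$ of a one-variable function field $K/k$, with dual 1-form $\omega$, one has $\delta^p = 0$ if and only if $\omega$ lies in the kernel of the Cartier operator (equivalently, $\omega$ is exact in $K$). The easy direction: if $\omega = df$, then $f$ is a separating element of $K/k$ and $\delta$ agrees with the unique extension of $d/df$ from $k(f)$ to $K$ across the finite separable extension $K/k(f)$; since $(d/df)^p = 0$ on $k(f)$ and the extension is unique, $\delta^p = 0$. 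For the converse one writes $\delta = \phi\,\partial_t$ in a separating coordinate $t$ and uses the explicit formula for the Cartier operator on $k(t)$ to translate $\delta^p = 0$ into the vanishing of the Cartier image of $\omega = (1/\phi)\,dt$.

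Granting this fact, (ii) $\Leftrightarrow$ (iii) follows from the observation that $\{x^j\,dx/y : 0 \le j \le g-1\}$ is a basis of $H^0(\mathcal{H},\Omega^1)$: every regular differential on $\mathcal{H}$ is $\omega_Q$ for a unique polynomial $Q$ of degree $\le g-1$, so the existence of $(a_0,\ldots,a_{g-1})$ not all zero making the equation stratified is equivalent to the existence of a nonzero regular differential in the kernel of the Cartier operator, i.e., to $a_{\mathcal{H}} \ge 1$. The main technical obstacle in executing this plan is the converse direction of the lemma relating $\delta^p = 0$ to the Cartier operator; everything else is a fairly clean translation between the differential-algebraic framework of \cite{VanderPutTop2015} and the cohomological description of the $a$-number.
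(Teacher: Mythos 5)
Your proposal is correct and follows essentially the same route as the paper: both reduce stratification to the condition that $\omega_Q=Q(x)\,dx/y$ lies in the kernel of the Cartier operator, and then identify the existence of a suitable nonzero $Q$ with the singularity of the Cartier--Manin matrix, hence with non-ordinarity and with $a_{\mathcal H}\geq 1$. The only difference is organizational: the paper quotes the ``stratified iff $\omega_Q$ exact iff $\mathcal{C}'(\omega_Q)=0$'' equivalence from van der Put--Top and then computes explicitly with Yui's formulas for the Cartier--Manin matrix to reach (i) directly, whereas you sketch a proof of that equivalence via the dual derivation and pass through the $a$-number (iii) first.
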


\begin{proof}
Let $\mathcal{C}'$ be the modified Cartier operator defined
in \cite[Definition 2.1']{Yui1978}; by the argument pointed out
in \cite[page 312]{VanderPutTop2015}, our differential equation
is stratified if and only the differential
form $\omega:=((a_{g-1}x^{g-1}+\ldots+a_1 x+a_0)/y)dx$ is exact, which
is equivalent to the condition $\mathcal{C}'(\omega)=0$. Our goal now is
to write down this condition in terms of the basis of differentials
$\omega_i:=(x^{i-1}/y)dx$ ($1\leq i\leq g$); it is easy to see that $\mathcal{C}'(\omega)=0$
if and only if
\[
\sum_{i=1}^g a_{i-1}^{1/p}\mathcal{C}'(\omega_i)=0.
\]
Now, if one writes $h(x)^{(p-1)/2}=\sum_{j=0}^N
c_j x^j$, (where $N=((p-1)/2)(2g+1)$) then one has
\cite[page 381]{Yui1978} that
\[
\mathcal{C}'(\omega_i)=\sum_{j=1}^g c_{jp-i}\omega_j,
\]
and therefore one ends up with the following equality:
\[
\sum_{j=1}^g \left(\sum_{i=1}^g a_{i-1}^{1/p}c_{jp-i}\right)\omega_j=0.
\]
Equivalently, since the $\omega_j$'s are $k$--linearly
independent,  for any $1\leq j\leq g,$
\[
\sum_{i=1}^g a_{i-1}^{1/p}c_{jp-i}=0.
\]
Summing up, if one denotes by $v$ the column vector
$(a_0^{1/p},\ldots, a_{g-1}^{1/p})$ and by $C$ the Cartier--Manin
matrix of the hyperelliptic curve $y^2=h(x)$
\cite[Definition 2.2]{Yui1978}, one has that our differential
equation is stratified if and only if $C\cdot v=0$, which, by \cite[Theorem 3.1]{Yui1978}, 
is equivalent to the statement that
the hyperelliptic curve $y^2=h(x)$ is not ordinary. This proves the equivalence between (i) and (ii); finally, the equivalence between (i) and (iii) follows immediately from the fact that the $a$-number of $\mathrm{Jac}(\mathcal{H})$ 
equals the corank of the Cartier--Manin matrix of $\mathcal{H}$ \cite[5.2.8]{LiOort1998}.
\end{proof}

Combining Proposition \ref{the autonomous hyperelliptic equation}
with Theorem \ref{intro2}, we obtain the following result.

\begin{cor}\label{stratification and level for higher genus}
Preserving the assumptions and notations of Proposition \ref{the autonomous hyperelliptic equation}, let
$g\geq 2,$ $p>2g^2-1,$ and let $f=y^2z^{2g-1}-z^{2g+1}h(x/z)$. If $\level (f)\geq 3$, then there are $a_0,a_1,\ldots, a_{g-1}\in k$ with $a_j\neq 0$ for at least one $j$ such that the equation
\[
(x')^2=\frac{h(x)}{(a_{g-1}x^{g-1}+\ldots+a_1 x+a_0)^2}
\]
is stratified.
\end{cor}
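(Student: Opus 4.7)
The plan is to chain Theorem \ref{intro2} and Proposition \ref{the autonomous hyperelliptic equation} via the common notion of ordinarity. First I would check that $f=y^2z^{2g-1}-z^{2g+1}h(x/z)$ is homogeneous of degree $2g+1$ in $k[x,y,z]$ (both summands have degree $2g+1$ since $\deg h=2g+1$), and that setting $z=1$ recovers $y^2=h(x)$. So $V(f)\subset \mathbb{P}^2$ is (birationally, via normalization) the smooth hyperelliptic curve $\mathcal{H}$ of genus $g$ appearing in Proposition \ref{the autonomous hyperelliptic equation}. Together with the hypothesis $p>2g^2-1$, this places $f$ squarely in the setting of Theorem \ref{intro2}.

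Next I would invoke the contrapositive of Theorem \ref{intro2}(i): if $\mathcal{H}$ were ordinary, the level of $f$ would equal $2$; hence the hypothesis $\level(f)\geq 3$ forces $\mathcal{H}$ to be non-ordinary. This is exactly condition (i) of Proposition \ref{the autonomous hyperelliptic equation}, so applying the implication (i) $\Rightarrow$ (ii) of that proposition yields coefficients $a_0,a_1,\ldots,a_{g-1}\in k$, not all zero, for which
\[
(x')^2=\frac{h(x)}{(a_{g-1}x^{g-1}+\ldots+a_1x+a_0)^2}
\]
is stratified, which is the desired conclusion.

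The argument is essentially a formal concatenation of the two previous results, so there is no real obstacle to overcome; the only point worth double-checking is the identification of $V(f)\subset\mathbb{P}^2$ (which is singular at infinity once $g\geq 2$) with the smooth hyperelliptic curve $\mathcal{H}$. Since Theorem \ref{intro2} is phrased in terms of an isomorphism $\mathcal{C}\cong V(f)$ rather than an equality, this identification is already absorbed into its hypotheses, so nothing further is needed.
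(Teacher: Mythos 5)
Your argument is correct and is precisely the one the paper intends: the corollary is stated as an immediate combination of Theorem \ref{intro2}(i) (whose contrapositive converts $\level(f)\geq 3$ into non-ordinarity of $\mathcal{H}$) with the implication (i) $\Rightarrow$ (ii) of Proposition \ref{the autonomous hyperelliptic equation}. Your remark about identifying the singular plane model $V(f)$ with the smooth hyperelliptic curve is a sensible point of care, and it is indeed absorbed into the hypotheses of Theorem \ref{intro2} as you note.
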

The next examples  illustrate some of the results obtained above. 

\begin{ex}\label{level does not characterize stratified}
Given $0\neq b\in\mathbb{F}_p,$ and $p>7,$ consider the equation
\begin{equation}\label{stratified equation example}
(x')^2=\frac{x^5+b}{(a_1x+a_0)^2},
\end{equation}
and assume that $p\equiv 3\pmod 5$ (e.g. $p=13$). The hyperelliptic curve of genus two $\mathcal{H}$ defined
by $y^2=x^5+b$ has the following Cartier--Manin matrix:
\[
\begin{pmatrix}
0& 0\\ c& 0
\end{pmatrix},\text { where }c:=\binom{(p-1)/2}{(2p-1)/5}b^{(p-3)/10}.
\]
In particular, $\mathcal{H}$ is not ordinary. In this case, $\mathcal{H}$
is supersingular (but not superspecial) and therefore $\level (y^2z^3-x^5-bz^5)\geq 3$ by \cite[Corollary 3.10]{BlancoBoixFordhamYilmaz2018}. The equation \eqref{stratified equation example} is stratified, if and only if  $a_1=0$, as follows from the fact that the differential form $dx/y$ is in the kernel of the Cartier operator, whereas  for $a_1\neq 0$ the form $a_0dx/y+a_1xdx/y$ is not in the kernel.

Assume that $p\equiv 4\pmod 5$ (e.g. $p=19$). In this case, by either \cite[Theorem 2]{Valentini1995} or \cite[Corollary of page 12]{KodamaWashio1986}, $\mathcal{H}$ is superspecial and therefore 
\eqref{stratified equation example} is stratified for any value of $a_1,a_0.$ In this case, $\level (y^2z^3-x^5-bz^5)\geq 3$ by \cite[Example 4.4]{BlancoBoixFordhamYilmaz2018}. In contrast, where $p\equiv 1\pmod 5$ (e.g. $p=11$), one can easily check that $\mathcal{H}$ is ordinary (this also follows from \cite[Theorem 3]{KodamaWashio1986}) and therefore 
\eqref{stratified equation example} is not stratified for any choice of $a_1,a_0.$ In this case, by Theorem~\ref{intro2}  $\level (y^2z^3-x^5-bz^5)=2.$
\end{ex}

\begin{ex}
Given $p>17,$ consider the equation
\begin{equation}\label{stratified equation example 2}
(x')^2=\frac{(x-1)^8-x^8}{(a_2x^2+a_1x+a_0)^2}.
\end{equation}
One can check that, under a M\"obius transformation of the form
\[
(x,y)\longmapsto\left(\frac{1}{x+1},\frac{y}{(x+1)^4}\right),
\]
the hyperelliptic curve $\mathcal{H}$ defined by $y^2=(x-1)^8-x^8$ corresponds to $y^2=x^8-1,$ and therefore both have the same $p$--rank. As shown in
\cite[Section 2]{KodamaTopWashio2009}, $\mathcal{H}$ is ordinary if and only if $p\equiv 1\pmod 8,$ and supersingular (that is, its $p$--rank is $0$) if and only if $p\equiv 7\pmod 8.$ In the ordinary case, we know that the level is $2,$ and at least three in the supersingular (not superspecial) case. However, in the remaining cases (where $p\equiv 3,5\pmod 8$) the curve has $p$--ranks $1$ and $2$ respectively, and in these two cases, while we can ensure that there are non--zero choices of $a_2,a_1,a_0$ such that \eqref{stratified equation example 2} can be either stratified or not, we can not predict in general what is the level.
\end{ex}

\section{The level of a pair of polynomials}\label{the level of a
pair: basic properties}
Hereafter, let $k$ be a perfect field
of prime characteristic $p$, and let
$R$ be the polynomial ring $k[x_1,\ldots, x_d]$. The aim of this section is to study the following concept.

\begin{df}\label{level of a pair}
Given polynomials $f,g$ with coefficients in $k$ and $f \neq 0$, one defines
the \emph{level of $(g,f)$} as 
\[
\level (g,f):=\inf\{e\geq 0:\ \exists\delta\in \mathcal D^{(e)}\text{ such that }\delta(g/f)=(g/f)^p\} \in \mathbb{N}_0 \cup \{\infty\}.
\]
When $g=1$, one denotes $\level(f)$ instead of
$\level (1,f)$; this is the
notion of level of a polynomial introduced in
\cite[Definition 2.6]{BoixDeStefaniVanzo2015}. 
\end{df}
\begin{rk}
Note that $\level(g,f)$ only depends on the quotient $g/f$, so one could also reasonably denote this notion by $\level(\frac{g}{f})$ instead. But this alternative notation is inconsistent with the one in \cite{BoixDeStefaniVanzo2015} in the case $f = 1$, so we stick with the notation $\level(g,f)$. In any case, one can usually assume that $g$ and $f$ are coprime, since common factors do not change the level of the pair. 

Note also that $\level(g,f) = 0$ if and only if $g/f \in R$. If $g$ and $f$ are coprime, this only happens if $f$ is a constant. 

In Proposition \ref{maybe of infinite level} we give an example of polynomials $f$ and $g$ such that $\level(g,f) = \infty$. 
\end{rk}

Before going on studying this notion, we review
the so--called ideals of $p^e$th roots; the interested reader
can find a more detailed treatment in \cite[page 465]{AlvarezBlickleLyubeznik2005}, \cite[Definition 2.2]{BlickleMustataSmith2008} and
\cite[Definition 5.1]{Katzman2008}. For an ideal $I \subset R$ we denote by $I^{[p^e]}$ the ideal generated by the $p^e$-th powers of elements of $I$. 

\begin{df}\label{ideals of pe roots} 
Given $g\in R$ and an integer $e \geq 0$, we define the \emph{ideal of $p^e$th roots} $I_e (g)$ to be the smallest ideal $J\subseteq R$ such that $g\in J^{[p^e]}.$
\end{df}
\begin{rk}
Under our assumptions, $R$ is a free $R^{p^e}$-module with basis given by the monomials $\{\mathbf{x}^{\mathbf{\alpha}} \mid \norm{\alpha} \leq p^e-1\}$. A polynomial $g \in R$ can therefore be written as 
\[
g=\sum_{0\leq\norm{\mathbf{\alpha}}\leq p^{e}-1}g_{\mathbf{\alpha}}^{p^e} \mathbf{x}^{\alpha},
\]
for unique $g_\alpha \in R$. Then $I_e (g)$ is the ideal of $R$ generated by elements $g_{\mathbf{\alpha}}$ \cite[Proposition 2.5]{BlickleMustataSmith2008}. 
\label{remarkI}
\end{rk}

The main relation between these ideals and differential operators is the following equality, valid for any polynomial $g \in R$ and any integer $e \geq 0$ (see \cite[Lemma 3.1]{AlvarezBlickleLyubeznik2005}): 
\begin{equation}\label{pe-th roots and differential operators}
  \mathcal{D}^{(e)}\cdot g = I_{e}(g)^{[p^e]}.  
\end{equation}
Using this, one can relate the level of a pair of polynomials to ideals of $p^e$th roots as follows. 

\begin{lm}\label{p root ideal entering} 
Let $f, g \in R$ and $e \geq 0$ be given. Then the following are equivalent:
\begin{enumerate}[(i)]
    \item  $\level(g,f) \leq e;$
    
    \item $I_e (g^pf^{p^e-p})\subseteq
I_e (gf^{p^e-1});$

    \item $I_e (g^pf^{p^e-p})^{[p^e]} \subseteq I_e (gf^{p^e-1})^{[p^e]}.$
\end{enumerate}
In particular, $\level (g,f) =\inf\{e\geq 0:\ I_e (g^pf^{p^e-p})\subseteq
I_e (gf^{p^e-1})\}.$
\end{lm}

\begin{proof}
The equivalence of (ii) and (iii) is proved in the last paragraph of the proof of \cite[Proposition 3.5]{AlvarezBlickleLyubeznik2005}. We prove that (i) and (iii) are equivalent. Suppose that there is $\delta\in \mathcal D^{(e)}$ such that $\delta(g/f)=(g/f)^p$. Since $\delta$ is linear over $p^e$-powers, this
implies that $\delta (gf^{p^e-1})=g^p f^{p^e-p}$. By \eqref{pe-th roots and differential operators}, this implies $g^pf^{p^e-p} \in I_e(g^pf^{p^e-p})^{[p^e]}$, so that $I_e (g^pf^{p^e-p})^{[p^e]}\subseteq
I_e (gf^{p^e-1})^{[p^e]}$.

Conversely, suppose now that $I_e (g^pf^{p^e-p})^{[p^e]}\subseteq
I_e (gf^{p^e-1})^{[p^e]}$. Again using
\eqref{pe-th roots and differential operators}, one has that
$\mathcal D^{(e)} (g^p f^{p^e-p})\subseteq \mathcal D^{(e)} (gf^{p^e-1})$. In particular
$g^p f^{p^e-p}\in \mathcal  D^{(e)} (gf^{p^e-1})$, hence
there is $\delta\in D^{(e)}$ such that
$\delta (gf^{p^e-1})=g^p f^{p^e-p}$. Multiplying
this equality by $1/f^{p^e}$ and using that $\delta$ is linear over $p^e$th powers, we get $\delta(g/f)=(g/f)^p$. 
\end{proof}

Observe that the equality
$\mathcal{D}^{(e)}\cdot g = I_{e}(g)^{[p^e]}$
is made explicit in, e.g., the proof of
\cite[Claim~3.4]{BoixDeStefaniVanzo2015}.
Using these techniques one can in case
$e=\level(g,f)<\infty$, algorithmically
construct an explicit operator 
$\delta\in\mathcal{D}_R^{(e)}$
with $\delta(g/f)=g^p/f^p$. However we do
not know how to decide whether the level
of a given pair is finite.

Our next goal is to show that the level of a pair is invariant under coordinate transformations. Although \ref{equivalent fractions under change of coordinates}, 
\ref{homogeneous change of coordinates give p basis} and
\ref{behavior of p roots linear change} can also be found in
\cite{BlancoBoixFordhamYilmaz2018}, we review it here for the convenience
of the reader.

Denote $G:=\operatorname{GL}_d (k)$ and observe that $R$ has a right action of $G$ defined by $(f|A)(x_1,...,x_d):=f(y_1,...,y_d)$, where
\[
\begin{pmatrix} y_1\\ \vdots\\ y_d\end{pmatrix}=A\cdot\begin{pmatrix} x_1\\ \vdots\\ x_d\end{pmatrix},
\]
for $A\in G$.
Observe as well that a matrix $A\in G$ induces an isomorphism $\phi_A$ of $k$--algebras $\xymatrix@1{R\ar[r]^-{\phi_A}& R}$ defined by $\phi_A(f)=f|A$, the inverse being given by $\phi_{A^{-1}}$. 
\begin{df}\label{equivalent fractions under change of coordinates}
Given homogeneous $f,g\in R$, we say that $f$ and $g$ are \emph{$G$--equivalent} if there is $A\in G$ such that $\phi_A(f)=g$.
\end{df}
We need the following easy fact.

\begin{lm}\label{homogeneous change of coordinates give p basis}
Notations as before, let $y_1,\ldots ,y_d\in R$ be homogeneous elements of degree $1$ such that
\[
\begin{pmatrix} y_1\\ \vdots\\ y_d\end{pmatrix}=A\cdot\begin{pmatrix} x_1\\ \vdots\\ x_d\end{pmatrix},
\]
for some $A\in G$. Then, for any $e\geq 0$ the set
\[
\mathcal{B}:=\{\mathbf{y}^{\alpha}:=y_1^{a_1}\cdots y_d^{a_d}:\ \alpha =(a_1,\ldots ,a_d)\in\N^d,\ 0\leq a_i\leq p^e-1\text{ for any }1\leq i\leq d\}
\]
is a basis of $R$ as $R^{p^e}$--module.
\end{lm}

\begin{proof}
We have $\mathbf{y}^\alpha = \phi_A(\mathbf{x}^\alpha)$ for every multi-index $\alpha$. Therefore, the set $\mathcal B$ is the image of the $R^{p^e}$-basis $\{\mathbf{x}^\alpha: \|\alpha\| \leq p^e-1\}$ under the $k$-algebra isomorphism $\phi_A$. Since $\phi_A$ restricts to an isomorphism on $R^{p^e}$, the result follows.  
\end{proof}

In this way, we are ready to prove the following:
\begin{teo}\label{behavior of p roots linear change}
For any $f\in R$, $A\in G$ and $e\geq 0$, it holds that
$\phi_A (I_e (f))=I_e(\phi_A(f))$. In particular, for all $f, g \in R$ we have $\level(f,g) = \level(\phi_A(f), \phi_A(g))$. 
\end{teo}

\begin{proof}
Setting
\[
\begin{pmatrix} y_1\\ \vdots\\ y_d\end{pmatrix}=A\cdot\begin{pmatrix} x_1\\ \vdots\\ x_d\end{pmatrix},
\]
and applying Lemma \ref{homogeneous change of coordinates give p basis} we see that the set
\[
\{\mathbf{y}^{\alpha}:=y_1^{a_1}\cdots y_d^{a_d}:\ \alpha =(a_1,\ldots ,a_d)\in\N^d,\ 0\leq a_i\leq p^e-1\text{ for any }1\leq i\leq d\}
\]
is a basis of $R$ as $R^{p^e}$--module.
Write
\[
f=\sum_{0\leq\norm{\alpha}\leq p^e-1}f_{\alpha}^{p^e}\mathbf{x}^{\alpha},
\]
for $f_{\alpha}\in R$. Then
\[
\phi_A(f)=\sum_{0\leq\norm{\alpha}\leq p^e-1}\phi_A(f_{\alpha})^{p^e}\mathbf{y}^{\alpha},
\]
which shows that $\phi_A (I_e (f))\subseteq I_e (\phi_A(f))$. Equality holds because 
$\phi_A$ is an isomorphism. The second claim follows from the first together with Lemma \ref{p root ideal entering}.
\end{proof}
In the next statement, our aim is to collect some properties that
the level of a pair of polynomials satisfy.

\begin{prop}\label{level of a pair of polynomials: properties}
Let $f, g \in R$ be non-zero polynomials such that $\frac{g}{f} \notin R$. Then the following statements hold.
\begin{enumerate}[(i)]
\item $\level (g,f)=1$ if and only if $g\in I_1 (gf^{p-1}).$
\item If $\level (f)=1$, then $\level (g,f)=1.$
\item If either $I_e (g^pf^{p^e-p})\not\subseteq I_e(f^{p^e-1})$
or $I_e (g^pf^{p^e-p})\not\subseteq I_e (g)$, then
$\level(g,f) > e.$
\item If $f$ and $g$ are homogeneous, and $e\geq 1$ is an integer such that $p^e > \deg g - \deg f$, then
$I_e (gf^{p^e-1})$ is generated by polynomials of degree at
most $\deg f.$
\end{enumerate}
\end{prop}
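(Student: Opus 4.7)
The plan is to use the explicit description of $I_e(h)$ from Remark~\ref{remarkI}: namely, $I_e(h)$ is generated by the (unique) coefficients $h_\alpha \in R$ appearing in the expansion $h = \sum_{\norm{\alpha}\leq p^e-1} h_\alpha^{p^e}\mathbf{x}^\alpha$. So it suffices to show that, when $h = gf^{p^e-1}$ and $p^e > \deg g - \deg f$, each nonzero $h_\alpha$ is homogeneous of degree at most $\deg f$.

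The first step is to observe that if $h$ itself is homogeneous, then so is every $h_\alpha$. Indeed, decomposing each $h_\alpha = \sum_m h_\alpha^{(m)}$ into homogeneous pieces and applying the Frobenius identity $h_\alpha^{p^e} = \sum_m (h_\alpha^{(m)})^{p^e}$ (valid in characteristic $p$), the expansion $h = \sum_{\alpha, m} (h_\alpha^{(m)})^{p^e}\mathbf{x}^\alpha$ is a sum of monomials of degree $p^e m + \norm{\alpha}$; comparing with the homogeneity of $h$ of degree $D:=\deg h$ forces, for each $\alpha$, at most one $m$ to contribute, namely $m = (D - \norm{\alpha})/p^e$. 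Hence every nonzero $h_\alpha$ is homogeneous of degree $(D - \norm{\alpha})/p^e$.

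Applying this to $h = gf^{p^e-1}$, which has degree $D = \deg g + (p^e-1)\deg f$, gives
\[
\deg h_\alpha \;=\; \deg f \;+\; \frac{\deg g - \deg f - \norm{\alpha}}{p^e}
\]
for every nonzero $h_\alpha$. The final step is the arithmetic check that this is at most $\deg f$: if instead $\deg h_\alpha > \deg f$, then $\deg g - \deg f - \norm{\alpha}$ is a positive integer divisible by $p^e$, hence at least $p^e$, forcing $\deg g - \deg f \geq p^e + \norm{\alpha} \geq p^e$, contradicting the hypothesis $p^e > \deg g - \deg f$. (If $\deg g < \deg f$ the bound is immediate.) Thus every generator of $I_e(gf^{p^e-1})$ has degree at most $\deg f$, as claimed. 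There is no serious obstacle here; the only point to be careful with is the Frobenius argument ensuring that the $h_\alpha$ are themselves homogeneous, since otherwise the degree bound would be meaningless.
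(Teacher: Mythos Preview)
Your argument addresses only part (iv), and for that part it is correct and follows the same line as the paper: expand $gf^{p^e-1}$ over the $R^{p^e}$-basis and bound the degree of each coefficient via the homogeneity equation $\deg g + (p^e-1)\deg f = p^e\deg c_\alpha + \deg(\mathbf{x}^\alpha)$. Your version is in fact slightly more careful than the paper's, since you explicitly justify that each coefficient $h_\alpha$ is homogeneous (the paper simply asserts the degree equality); one small notational slip is that in this paper $\norm{\alpha}$ means $\max_i a_i$, not $\deg(\mathbf{x}^\alpha)=\sum_i a_i$, so your displayed formula should read $\deg h_\alpha = \deg f + (\deg g - \deg f - \deg(\mathbf{x}^\alpha))/p^e$, though this does not affect the argument.
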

\begin{proof}
The assumption that $f$ does not divide $g$ in $R$ implies that $\level(g,f) > 0$. Then (i) follows from Lemma \ref{p root ideal entering} together with the easy observation that $I_1(g^p) = (g)$. Part (ii) was already proved in \cite[page 248]{BoixDeStefaniVanzo2015}; we
repeat the proof for the sake of completeness. Let
$\delta'\in \mathcal D^{(1)}$ such that $\delta ' (1/f)=1/f^p$. Then define 
$\delta:=\delta '\circ (\cdot g^{p-1})$. We find that
$\delta (g/f)=\delta ' (g^p/f)=g^p \delta ' (1/f)=(g/f)^p$, as
required.

Part (iii) follows immediately combining
Lemma \ref{p root ideal entering} with the fact that
$I_e (g f^{p^e-1})\subseteq I_e(g) I_e (f^{p^e-1})$
\cite[Lemma 3.3]{AlvarezBlickleLyubeznik2005}. Finally, to
prove part (iv) fix $e\geq 1$ an integer and write
\[
gf^{p^e-1}=\sum_{0\leq\norm{\alpha}\leq p^e-1}c_{\alpha}^{p^e}\mathbf{x}^{\alpha},
\]
for some $c_{\alpha}\in R$. Since both $f$ and $g$ are homogeneous
it follows that
\[
\deg(g)+(p^e-1)\deg (f)=p^e \deg (c_{\alpha})+\deg (\mathbf{x}^{\alpha}),
\]
which implies that
\[
\deg (c_{\alpha})\leq\frac{(p^e-1)\deg (f)+\deg (g)}{p^e}= \deg (f)+ \frac{\deg g - \deg f}{p^e}.
\]
The second term on the right hand side is smaller than 1 by assumption, and since both sides are integers, we get $\deg c_\alpha \leq \deg f$. The result follows. 
\end{proof}

\section{Some examples}\label{some concrete calculations
of the level of a pair}
The goal of this section is to calculate the level of a pair of polynomials $(g,f)$ for several particular choices of $g$ and $f$; we will quickly see that, even for low degrees, most of the calculations are highly non--trivial. In particular, we show that $\level(g,f)$ is, in general, not always finite (see Example \ref{maybe of infinite level}).

We want to start with the case considered by Singh, 
see for example \cite{Singh2017}.

\begin{lm}\label{Singh determinantal example}
Let $p$ be a prime number, $X=\begin{pmatrix}
u& v& w\\ x& y& z\end{pmatrix}$ be a matrix of indeterminates
defined over $R=k [u,v,w,x,y,z]$, and set $\Delta_1:=vz-wy$, $\Delta_2:=wx-uz$ and
$\Delta_3:=uy-vx$. Then, $\level (g,f)=1$ for each pair
$(g,f)\in\left\{(w,\Delta_1\Delta_2),\, (v,\Delta_1\Delta_3),\,(u,\Delta_2\Delta_3) \right\}.$
\end{lm}

\begin{proof}
By symmetry, it is enough to show that $\level (g,f)=1$ when $(g,f)=(w,\Delta_1\Delta_2)$.
Set $f:=\Delta_1\Delta_2$, and notice that $f=1\cdot (xzvw)+ (-z^2)\cdot (uv)+(-w^2)\cdot (xy)+1\cdot (yzuw)$. This shows that, if $p=2$, then $I_1 (f)=R$ so $\level (f)=1$ and therefore $\level(g,f)=1$. Now, assume that $p\geq 3$, one can check that in the support of $f^{p-1}$ appears the monomial $(xyuv)^{(p-1)/2}(zw)^{p-1}$ with coefficient $\binom{p-1}{(p-1)/2}$; this shows again that $\level (f)=1$ and therefore $\level(g,f)=1.$
\end{proof}

\begin{rk}
Notice that, in the setting considered in Lemma \ref{Singh determinantal example}, Singh shows in \cite{Singh2017} that the differential operator
$\delta:=D_{u,p-1}D_{y,p-1}D_{z,p-1}$ (which is clearly of level one) is such that $\delta(g/f)=(g/f)^p$, for $g/f$ any of the three fractions considered in Lemma \ref{Singh determinantal example}.
\end{rk}

\begin{lm}\label{the case of the projective line}
Let $k$ be a field of characteristic $p,$ let $f=x^d,$ assume that $p\geq d,$ and let
$g\in R=k[x,y]$ be a homogeneous polynomial of degree
$d$ which is not a multiple of $f$. Then, $\level (g,f)=2$ unless $g\in (x^{d-1})$, in which
case $\level (g,f)=1.$
\end{lm}

\begin{proof}
Write $g=\sum_{i=0}^d a_{i}x^i y^{d-i}$; now, notice that
\[
gf^{p-1}=\sum_{i=0}^d a_{i}x^{i+d(p-1)} y^{d-i}.
\]
Given $0\leq i\leq d$ write $i+d(p-1)=(d-1)p+(p+i-d)$, and notice that, unless
$i=d$, $0\leq p+i-d\leq p-1$ (here, we are also using that
$d\leq p$). This shows that
$I_1 (gf^{p-1})= (a_{d}^{1/p}x^d, a_{i}^{1/p}x^{d-1}:\ 
1\leq i\leq d-1)= (x^{d-1}),$
so $\level (g,f)\neq 1$ unless $g\in (x^{d-1})$, in which case $\level (g,f)=1$. So, from
now on, assume that $g\notin (x^{d-1})$.

We have $I_2(g^pf^{p^2-p}) = I_1(gf^{p-1}) = (x^{d-1})$. 
Now, write
\[
gf^{p^2-1}=\sum_{i=0}^d a_{i}x^{i+d(p^2-1)} y^{d-i}.
\]
Again, the equality $i+d(p^2-1)=(d-1)p^2+(p^2+i-d)$ and the fact
unless $i=d$, $p^2+i-d\leq p^2-1$, shows that 
$I_2 (gf^{p^2-1}) = (a_{d}^{1/p^2}x^d, a_{i}^{1/p^2}x^{d-1}:\ 
1\leq i\leq d-1)= (x^{d-1}) = I_2(g^pf^{p^2-p})$,
and therefore $\level (g,f)=2$, as claimed.
\end{proof}

Lemma \ref{the case of the projective line} has the following interesting
consequence.

\begin{lm}\label{the case of quadratic forms}
Let $k$ be a field of prime characteristic $p,$ and let $f,g\in k[x,y]$ be quadratic forms. If
$\sqrt{(f)}$ denotes the radical of $(f)$, then
\[
\level (g,f)=\begin{cases} 0, \text{ if $g$ is a multiple of $f$}, \\
1,\text{ if either $f$ is not the square of a linear form, or if }g\in\sqrt{(f)},\\
2,\text{ otherwise.}\end{cases}
\] 
\end{lm}

\begin{proof}
First of all, if $f$ is not the square of a linear form, then by
\cite[Proposition 5.7]{BoixDeStefaniVanzo2015} $\level (f)=1$ and
therefore part (ii) of Proposition \ref{level of a pair of polynomials: properties}
implies that $\level(g,f)=1$. So, hereafter we assume that
$f$ is the square of a linear form; thanks to
Theorem \ref{behavior of p roots linear change} we
can assume, without loss of generality, that
$f=x^2$ and that $g$ is again a quadratic form. Then, in this
case, Lemma \ref{the case of the projective line} says exactly
that $\level(g,f)=2$ unless $g\in (x)$, in which case
$\level(g,f)=1$; the proof is therefore completed.
\end{proof}

As a more elaborate example we now consider
$\level(g,f)$ with $f=x^3+y^3+z^3$ and $g$
any homogeneous cubic in $3$ variables which is not a scalar multiple of $f$.
Since $\level(f)=1$ in case the characteristic
$p\equiv 1\pmod 3$, Proposition~\ref{level of a pair of polynomials: properties}~(ii)
shows $\level(g,f)=1$ for $p\equiv 1\pmod 3$
and any such $g$.

We expect that the same holds for all 
characteristics $p\geq 5$. The next two special
cases show that this is correct for
most $g$. By Example~\ref{p can not be two in the Fermat case}, the same
does not hold in characteristics $p=2,3$.

\begin{cl}\label{the Fermat cubic case}
Let $p\geq 5$ with $p\equiv 2 \pmod 3$, let $f=x^3+y^3+z^3$, and let
$g\in R=k[x,y,z]$ be a homogeneous polynomial
of degree $3$ such that, if one writes $g=\sum_{a+b+c=3}g_{a,b,c}x^a y^b z^c$, and set $B:=\binom{p-1}{(p-2)/3,(p-2)/3,(p+1)/3}$, 
$C:=\binom{p-1}{(p-2)/3}$, $D:=\binom{p-1}{1,2(p-2)/3,(p-2)/3},$
$E:=\binom{p-1}{1,(2p-1)/3,(p-5)/3}$, and
$F:=\binom{p-1}{(p+4)/3,(p-2)/3,(p-5)/3}$, then the rank of
\[
A:=\begin{pmatrix} Bg_{1,1,1}& Cg_{2,0,1}& Cg_{2,1,0}\\
Cg_{0,2,1}& Bg_{1,1,1}& Cg_{1,2,0}\\
Cg_{0,1,2}& Cg_{1,0,2}& Bg_{1,1,1}\\
B g_{2,0,1}& Dg_{0,2,1}& Cg_{3,0,0}+Eg_{0,3,0}+Dg_{0,0,3}\\
B g_{2,1,0}& Cg_{3,0,0}+Eg_{0,3,0}+Dg_{0,0,3}& Dg_{0,1,2}\\
B g_{3,0,0}+Fg_{0,3,0}+Fg_{0,0,3}& Dg_{1,2,0}& Dg_{1,0,2}\\
D g_{2,0,1}& Bg_{0,2,1}& Eg_{3,0,0}+Cg_{0,3,0}+Dg_{0,0,3}\\
Eg_{3,0,0}+Cg_{0,3,0}+Dg_{0,0,3}& Bg_{1,2,0}& Dg_{1,0,2}\\
D g_{2,1,0}& F g_{3,0,0}+Bg_{0,3,0}+Fg_{0,0,3}& Dg_{0,1,2}\\
D g_{2,1,0}& Eg_{3,0,0}+Dg_{0,3,0}+Cg_{0,0,3}& Bg_{0,1,2}\\
E g_{3,0,0}+D g_{0,3,0}+C g_{0,0,3}& Dg_{1,2,0}& Bg_{1,0,2}\\
D g_{2,0,1}& Dg_{0,2,1}& F g_{3,0,0}+F g_{0,3,0}+ Bg_{0,0,3}\end{pmatrix}
\]
is three. Then $\level (g,f)\leq 1$, with equality exactly if $g$ is not a multiple of $f$.
\end{cl}

\begin{proof}
Write $g=\sum_{a+b+c=3}g_{a,b,c}x^a y^b z^c$, and
\[
gf^{p-1}=\sum_{a+b+c=3}\sum_{i+j+k=p-1}
g_{a,b,c}\binom{p-1}{i,j,k}x^{3i+a}y^{3j+b}
z^{3k+c}.
\]
Then, if one picks $i=j=(p-2)/3$ and $k=(p+1)/3$, then
the corresponding term of $gf^{p-1}$ is
\[
\sum_{a+b+c=3}g_{a,b,c}\binom{p-1}{i,j,k}
z^p \cdot (x^{p-2+a}y^{p-2+b}z^{c+1}).
\]
Again, if $i=k=(p-2)/3$ and $j=(p+1)/3$, then
the corresponding term of $gf^{p-1}$ is
\[
\sum_{a+b+c=3}g_{a,b,c}\binom{p-1}{i,j,k}
y^p \cdot (x^{p-2+a}y^{b+1}z^{p-2+c}).
\]
By the same reason, if $j=k=(p-2)/3$ and $i=(p+1)/3$, then
the corresponding term of $gf^{p-1}$ is
\[
\sum_{a+b+c=3}g_{a,b,c}\binom{p-1}{i,j,k}
x^p \cdot (x^{a+1}y^{p-2+b}z^{p-2+c}).
\]
The above expansions show that the
basis elements $x^{p-1}y^{p-1}z^2$, $x^{p-1}y^2z^{p-1}$ and
$x^2y^{p-1}z^{p-1}$ contain respectively in their coefficient the below term, where $B:=\binom{p-1}{(p-2)/3,(p-2)/3,(p+1)/3}$:
\[
g_{1,1,1}B z^p,
g_{1,1,1}B y^p,
g_{1,1,1}B x^p.
\]
Hereafter, we only plan to prove that the coefficient of
$x^{p-1}y^{p-1}z^2$ is exactly $Cg_{0,1,2}x^p+
Cg_{1,0,2}y^p+Bg_{1,1,1} z^p$
and one can show using the same arguments that the
coefficient of $x^{p-1}y^2z^{p-1}$ (resp. $x^2y^{p-1}z^{p-1}$)
is exactly $Cg_{0,2,1}x^p+Bg_{1,1,1}y^p+Cg_{1,2,0} z^p$ resp. 
$Bg_{1,1,1}x^p+Cg_{2,0,1}y^p+Cg_{2,1,0} z^p.$

Indeed, we want to calculate the coefficient of $x^{p-1}y^{p-1}z^2$, so
suppose that there are non--negative integers $\lambda,\mu,\gamma$
such that $3i+a=\lambda p+p-1,\ 3j+b=\mu p+p-1,\ 3k+c=\gamma p+2.$
Since $\deg (gf^{p-1})=3p$, it follows that $3p=3i+a+3j+b+3k+c=(\lambda+\mu+\gamma +2)p,$
which implies that $\lambda+\mu+\gamma=1$, so we only have
three possibilities for these integers; namely, $(1,0,0)$, $(0,1,0)$ and
$(0,0,1)$. For $(1,0,0)$, we get $i=(2p-1-a)/3,\ j=(p-1-b)/3,\ k=(2-c)/3.$
Since $p\equiv 2\pmod 3$, this forces $a=0$, $b=1$ and $c=2$.  By the
same argument, for $(0,1,0)$ one gets $a=1$, $b=0$ and $c=2$, and
finally, for $(0,0,1)$ one ends up with
$a=b=c=1$. This shows that the coefficient of
$x^{p-1}y^{p-1}z^2$ is exactly $B(g_{0,1,2}x^p+
g_{1,0,2}y^p+g_{1,1,1} z^p)$, as claimed.

One might ask from where the other rows of matrix $A$ appearing in
our assumption comes from; following the same arguments, these rows
corresponds to the calculation of the coefficients of the below
basis elements:
\begin{align*}
& x^3y^{p-2}z^{p-1},\ x^3y^{p-1}z^{p-2},\ x^4y^{p-2}z^{p-2},\\
& x^{p-2}y^3z^{p-1},\ x^{p-1}y^3z^{p-2},\ x^{p-2}y^4z^{p-2},\\
& x^{p-2}y^{p-1}z^3,\ x^{p-1}y^{p-2}z^3,\ x^{p-2}y^{p-2}z^4.
\end{align*}

Summing up, the foregoing implies, since by assumption
the rank of $A$ is $3$, that $(x,y,z)=I_1 (gf^{p-1})$, hence
$g\in I_1 (gf^{p-1})$ and this shows that $\level (g,f)=1$ by using
part (i) of Proposition \ref{level of a pair of polynomials: properties}.
\end{proof}

\begin{cl}\label{homogeneous fractions of degree 3}
Let $p\geq 5$, let $f=x^3+y^3+z^3$, and let
$g\in R=k[x,y,z]$ be a non--zero monomial
of degree $3$. Then, $\level(g,f)=1.$
\end{cl}
\begin{proof}
If $p\equiv 1 \pmod 3$, then $\level (f)=1$ and therefore $\level(g,f)=1$
by part (ii) of Proposition \ref{level of a pair of polynomials: properties}, so hereafter we will assume that $p\equiv 2\pmod 3$. By symmetry, it is enough to consider the monomials $g = x^3$, $g = x^2y$ and $g = xyz$. In each of these cases, we will simply construct an explicit differential operator of level $1$ that does what is needed. For $g = x^3$, consider first
\[ \delta = D_{x,p-1} \circ D_{y, p-2} \circ D_{z, 3} \]
(see the Introduction for the notation $D_{x,n}$). Clearly $\delta$ is of level 1, since $p > 3$. 
We have that 
\[ gf^{p-1} = \sum_{i+j+k = p-1} \binom{p-1}{i,j,k} x^{3i+3}y^{3j}z^{3k}.\]
Applying $\delta$ gives us
\[ \delta(gf^{p-1}) = \sum_{i+j+k = p-1} \binom{p-1}{i,j,k} \binom{3i+3}{p-1}\binom{3j}{p-2}\binom{3k}{3} x^{3i+4-p}y^{3j+2-p}z^{3k-3},\]
where we use the convention that $\binom{n}{k} = 0$ for $k > n$. We investigate for which indices $i, j, k$ the coefficient in this term is zero. The first factor is never zero, since $p-1$, $i$, $j$ and $k$ are all between $0$ and $p-1$. The second factor is zero unless $3i + 3 \equiv -1 \pmod p$, as can be seen by writing out the product. Since $i$ lies between $0$ and $p-1$, and since $p \equiv 2 \pmod 3$, the only integer value for $i$ such that $3i + 3 \equiv -1 \bmod p$ is $i = (2p-4)/3$. This means that $j$ is at most $(p+1)/3$. The third factor $\binom{3j}{p-2}$ is zero unless $3j$ is either $-1$ or $-2$ modulo $p$. In the allowed range for $j$, the only integer possibility is $j = (p-2)/3$. This leaves $k = 1$, and for this value of $k$ we have $\binom{3k}{3} = 1 \neq 0$. So we see that the only non-zero term in $\delta(gf^{p-1})$ is the one for indices $(i,j,k) = (\frac{2p-4}{3}, \frac{p-2}{3}, 1)$. This gives
\[ \delta(gf^{p-1}) = \binom{p-1}{\frac{2p-4}{3}, \frac{p-2}{3}, 1} \binom{2p-1}{p-1}\binom{p-2}{p-2}\binom{3}{3}x^{p} = \binom{p-1}{\frac{2p-4}{3}, \frac{p-2}{3}, 1} x^{p}\]
Define now
\[ \Delta = \binom{p-1}{\frac{2p-4}{3}, \frac{p-2}{3}, 1}^{-1} \cdot x^{2p}\cdot \delta,\]
then $\Delta$ is also a differential operator of level 1, and by construction we have $\Delta(gf^{p-1}) = x^{3p} = g^p$. Using that $\Delta$ is $R^p$-linear, we may divide both sides by $f^p$ and get $\Delta(g/f) = g^p/f^p,$ as needed.

For the other cases $g=x^2y$ and $g = xyz$, a similar analysis shows that the operators
\[ C \cdot x^{p}y^p D_{x,p-2}D_{y,p-1}D_{z,3}, \quad \textrm{resp.} \quad C' \cdot y^pz^p, D_{x,p-3}D_{y,p-1}D_{z,4}\]
for suitably chosen non-zero constants $C, C'  \in \mathbb F_p,$ have the required property.
\end{proof}

Notice that, in the example considered in
Lemma \ref{the Fermat cubic case}, $\level (f)=2>\level (g,f)=1$. From here, one
might ask whether, in general, $\level (g,f)\leq\level (f)$; however, this
is not the case, as the below example shows. The unjustified
calculations were done with Magma \cite{Magma}.

\begin{ex}\label{level of pair not bounded by level of denominator}
Let $R=k [x,y,z,w]$, $g=y$ and $f=xy^{p+1}
+yz^{p+1}+zw^{p+1}$; when $p\in\{2,3,5\}$, $\level(g)=1,$
$\level(f)=2$, but $\level(g,f)=4.$

For any prime $p$, what is easy to show in this example is that $\level(g,f)\geq 2$; indeed, notice
that
\[
gf^{p-1}=\sum_{\substack{0\leq i,j,k\leq p-1\\ i+j+k=p-1}}
\binom{p-1}{i,j,k}(y^iz^jw^k)^p \cdot (x^i y^{p-k}z^{p-1-i}w^k).
\]
We claim that, whereas $y^p\in I_1 (gf^{p-1})$, $g=y\notin I_1 (gf^{p-1})$. Indeed, if in the
above expansion we pick $j=k=0$ and $i=p-1$, then one gets that
$gf^{p-1}=(y^p)^p (x^{p-1})+\ldots$, and this choice is the only one
that makes the basis element $x^{p-1}$ appearing in this expansion. This
shows that $y^p\in I_1 (gf^{p-1})$; moreover, notice that, if one
choices a $i,j,k$ as above where $i<p-1$, then the coefficient
of the corresponding basis element is made up by monomials that are divisible
by either $z$ or $w$. This shows that $y^p$ is the smallest possible
power of $y$ that belongs to $I_1 (gf^{p-1})$, hence
$g=y\notin I_1 (gf^{p-1})$ and therefore $\level (g,f)\geq 2$, as
claimed.
\end{ex}
Moreover, again about Lemma \ref{homogeneous fractions of degree 3}, we want to
single out that the assumption $p\neq 2,3$ can not be removed, as the following examples show.

\begin{ex}\label{p can not be two in the Fermat case}
Let $p=2$, let $R=k[x,y,z]$, $f=x^3+y^3+z^3$ and
$g=xyz$; we claim $\level (g,f)=2$. Indeed, on the one hand,
$gf^{p-1}=(x^2)^2\cdot (yz)+(y^2)^2\cdot (xz)+(z^2)^2\cdot (xy)$, so $g=xyz\notin I_1(gf^{p-1})=(x^2,y^2,z^2)$; this shows, by part (i)
of Proposition \ref{level of a pair of polynomials: properties}, that
$\level (g,f)\geq 2$. On the other hand,
\begin{align*}
gf^{p^2-1}= & (x^2)^4\cdot (x^2yz)+(y^2)^4\cdot (xy^2z)
+(z^2)^4\cdot (xyz^2)+(xy)^4 \cdot (x^3z)+
(xy)^4 \cdot (y^3z)\\ &+(xz)^4 \cdot (x^3y)+
(xz)^4 \cdot (yz^3) +(yz)^4 \cdot (xyz^3),
\end{align*}
and $g^p f^{p^2-p}=x^8 (yz)^2+y^8 (xz)^2+z^8 (xy)^2$; these
last two computations show that
\[
g^p f^{p^2-p}\in (x^2,y^2,z^2,xy,xz,yz)^{[p^2]}=I_2 (gf^{p^2-1})^{[p^2]},
\]
and therefore Lemma \ref{p root ideal entering} ensures
$\level(g,f)=2$, as claimed.

Now, assume that $p=3$ ($g$ and $f$ are the same); in this case, one can check that $J:=I_1 (gf^2)=(x^2+2xy+y^2+2xz+2yz+z^2)$ and $g=xyz\notin J.$ One way to check it is the following; denote
by $V(J)$ the hypersurface defined by $J.$ This hypersurface contains the point $(1,1,1),$ which is a point which does not belong to $V(xyz).$ This shows that $xyz\notin J.$ 

The above argument shows that $\level (g,f)\geq 2$ and, actually, one can check either by hand or by computer that $\level (g,f)=2.$
\end{ex}

We conclude this section with an example showing that the
level of a pair of polynomials is, in general, not finite. This in fact answers a question raised in \cite[Section 5]{BoixDeStefaniVanzo2015}.

\begin{prop}\label{maybe of infinite level}
Let $R=k[x,y]$ with $\operatorname{char} k = p$, and let $f=x^{p+1}+y^{p+1}$ and
$g=x$.
Then $\level(g,f)=\infty$.
In particular, no $\delta\in \mathcal{D}_R$
exists with $\delta(g/f)=g^p/f^p$.
\end{prop}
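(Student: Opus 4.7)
I would reduce the infinite-level statement to an infinite sequence of ideal non-containments involving the ideals $J_e := I_e(gf^{p^e-1})$. The first step uses two observations:
\[
g^p f^{p^e-p} \;=\; x^p(x^{p+1}+y^{p+1})^{p^e-p} \;=\; \bigl(x(x^{p+1}+y^{p+1})^{p^{e-1}-1}\bigr)^p \;=\; \bigl(gf^{p^{e-1}-1}\bigr)^p,
\]
together with the elementary identity $I_e(a^p) = I_{e-1}(a)$ valid for any $a\in R$ and $e\geq 1$ (proved by writing $a = \sum_\beta a_\beta^{p^{e-1}}\mathbf{x}^\beta$ in the $R^{p^{e-1}}$-basis with $0\leq\beta_i\leq p^{e-1}-1$, raising to the $p$-th power to get $a^p = \sum_\beta a_\beta^{p^e}\mathbf{x}^{p\beta}$, and noting that $0\leq p\beta_i\leq p^e-p<p^e$ so this is already the $R^{p^e}$-expansion). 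Applying Lemma~\ref{p root ideal entering} shows that $\level(g,f)\leq e$ is equivalent to $J_{e-1}\subseteq J_e$. Since $\level(g,f)\geq 1$ is automatic (as $g/f\notin R$), the proposition reduces to proving $J_{e-1}\not\subseteq J_e$ for every $e\geq 1$.

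Next I would compute $J_e$ explicitly from
\[
gf^{p^e-1} \;=\; \sum_{i=0}^{p^e-1}\binom{p^e-1}{i}\,x^{(p+1)i+1}\,y^{(p+1)(p^e-1-i)}.
\]
Because $\gcd(p+1,p^e)=1$, the map $i\mapsto ((p+1)i+1)\bmod p^e$ is a bijection of $\{0,\dots,p^e-1\}$, so the terms of the sum occupy pairwise distinct residue classes in the $R^{p^e}$-basis and no cancellation occurs. Consequently $J_e$ is generated by the monomials $x^{A_i}y^{B_i}$, where $A_i$ and $B_i$ are the base-$p^e$ quotients of $(p+1)i+1$ and $(p+1)(p^e-1-i)$. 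The identity $((p+1)i+1)+(p+1)(p^e-1-i)=(p+1)p^e-p$ forces $A_i+B_i\in\{p-1,p\}$, and the degree-$(p-1)$ generators correspond to the $p-1$ indices whose residue $\alpha_i := ((p+1)i+1)\bmod p^e$ lies in $\{p^e-p+1,\dots,p^e-1\}$; parametrise them by $\alpha_i = p^e-p+1+j$ with $j\in\{0,\dots,p-2\}$.

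The crucial step is identifying $A_i$ for these $p-1$ indices. The congruence $(p+1)(i+1)\equiv j+1\pmod{p^e}$ yields $(p+1)(i+1)-(j+1) = Mp^e$ with $A_i=M-1$; reducing modulo $p+1$ and using $p^e\equiv(-1)^e\pmod{p+1}$ gives $A_i\equiv (-1)^{e+1}(j+1)-1\pmod{p+1}$. Since $0\leq A_i\leq p-1<p+1$, this pins $A_i$ down as $A_i=j$ when $e$ is odd and $A_i=p-1-j$ when $e$ is even. Hence the degree-$(p-1)$ generators of $J_e$ comprise all monomials of degree $p-1$ except $x^{p-1}$ (for odd $e$), respectively except $y^{p-1}$ (for even $e$). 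As $J_e$ is generated in degrees $p-1$ and $p$, its degree-$(p-1)$ graded piece is the $k$-span of these generators, so the missing end-monomial genuinely lies outside $J_e$.

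For $e\geq 2$ the parities of $e-1$ and $e$ differ, so whichever of $x^{p-1},y^{p-1}$ is absent from $J_e$ is present in $J_{e-1}$, giving $J_{e-1}\not\subseteq J_e$. The remaining case $e=1$ is handled directly: $J_0=(x)$ while every generator of $J_1$ has total degree at least $p-1\geq 1$, and a trivial check shows $x\notin J_1$. This yields $\level(g,f)>e$ for all $e\geq 0$, hence $\level(g,f)=\infty$. The main obstacle I anticipate is the parity bookkeeping of the third step, namely extracting the dependence of $A_i$ on $e\bmod 2$ from the alternating inverse $(p+1)^{-1}\equiv 1-p+p^2-\cdots\pmod{p^e}$, which is exactly what pins down which end-monomial is absent from $J_e$.
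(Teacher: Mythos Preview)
Your argument is correct. The key reduction $I_e(g^pf^{p^e-p})=I_{e-1}(gf^{p^{e-1}-1})=J_{e-1}$ is valid, the bijectivity of $i\mapsto (p+1)i+1\bmod p^e$ correctly yields that $J_e$ is monomial, and the parity computation pinning down $A_i$ is right: for odd $e$ the degree-$(p-1)$ generators miss $x^{p-1}$, for even $e$ they miss $y^{p-1}$, and since these are monomial ideals generated in degrees $\geq p-1$ the missing monomial really is absent. The alternation then gives $J_{e-1}\not\subseteq J_e$ for $e\geq 2$, and the $e=1$ check is fine.

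Your route differs from the paper's in two respects. First, the paper does not use the identity $I_e(g^pf^{p^e-p})=J_{e-1}$; instead it works directly with $g^pf^{p^e-p}$, exhibits the specific monomial $m=x^{p^e-p^2+p}y^{p^{e+1}-p}$ occurring in it (via Lucas's theorem), and argues that the largest $p^e$-th power dividing $m$ is $y^{(p-1)p^e}$, so it suffices to show $y^{p-1}\notin J_e$. Second, the paper only treats \emph{even} $e\geq 2$ (which already forces $\level(g,f)=\infty$) and therefore needs only the ``$y^{p-1}$ is missing'' half of your parity analysis; it establishes this by a direct inequality on the $y$-exponent rather than by computing $A_i$ exactly. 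Your reformulation via $J_{e-1}\subseteq J_e$ is cleaner and more structural---it explains \emph{why} the obstruction persists (the missing end-monomial flips with parity)---at the cost of the extra arithmetic with $(p+1)^{-1}\bmod p^e$. The paper's approach avoids that bookkeeping by restricting to a single parity and paying instead with a Lucas-theorem computation to locate a suitable monomial in $g^pf^{p^e-p}$.
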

\begin{proof}
Let $e \geq 2$ be an arbitrary even integer. We will show that $\level(g,f) > e$. By Lemma \ref{p root ideal entering}, this is equivalent to showing that $I_e(g^pf^{p^e-p})^{[p^e]} \not \subseteq I_e(gf^{p^e-1})^{[p^e]}$.

First we show that $I_e(gf^{p^e-1})$ is a monomial ideal. Indeed, we have 
\begin{equation} \label{eq:gfpe-1}
gf^{p^e-1} = \sum_{i = 0}^{p^e-1} \binom{p^e-1}{i} x^{i(p+1)+1}y^{(p^e-1-i)(p+1)}.
\end{equation}
By the description of $I_e$ in Remark \ref{remarkI}, to find generators of $I_e(gf^{p^e-1})$, we take out $p^e$-th powers. If for two indices $i$ and $j$ the corresponding terms above differ by a $p^e$-th power, then they both contribute to the same generator. But this happens only if the exponents for $x$ and $y$ are congruent modulo $p^e$. From $i(p+1) + 1 \equiv j(p+1) + 1 \pmod{p^e}$ we obtain $i \equiv j \pmod{p^e}$ since $p+1$ is a unit modulo $p^e$. But if $0 \leq i,j \leq p^e - 1$ and $i \equiv j \pmod{p^e}$, then $i = j$. So we see that the terms occurring in $gf^{p^e-1}$ are independent over $\operatorname{Frac}(R^{p^e})$. Hence the generators for $I_e(gf^{p^e-1})$ that we get from Remark \ref{remarkI} are monomials, and so $I_e(gf^{p^e-1})$ is a monomial ideal. It follows that also $I_e(gf^{p^e-1})^{[p^e]}$ is a monomial ideal. 

Now we show that $g^pf^{p^e-p} \notin I_e(gf^{p^e-1})^{[p^e]}$. Since the latter is a monomial ideal, it is sufficient to find a monomial that occurs in $g^pf^{p^e-p}$ with non-zero coefficient which is not in this ideal. For this, set $m := x^{p^{e} - p^2 + p}y^{p^{e+1} - p}.$
We claim that this monomial occurs in $g^pf^{p^e-p}$ with non-zero coefficient. We have
\[ g^pf^{p^e-p} = \sum_{i = 0}^{p^e-p} \binom{p^e-p}{i} x^{i(p+1)+p}y^{(p^e-p-i)(p+1)}.\]
We see that our monomial $m$ occurs for index $i = (p^e - p^2)/(p+1)$, which is an integer because $e$ is even. To evaluate the binomial coefficient for this value of $i$, we can look at the p-adic digits of the numbers involved. We have $p^e - p = (p-1)p^{e-1} + (p-1)p^{e-2} + \ldots + (p-1)p$, and we have $i = (p-1)p^{e-2} + (p-1)p^{e-4} + \ldots + (p-1)p^2$. Using Lucas's theorem \cite[pp. 51--52]{Lucas}, we find that the binomial coefficient evaluates to $1$, so in particular it is non-zero. 

Now we need to show that $m \notin I_e(gf^{p^e-1})^{[p^e]}$. This ideal is generated by monomials which are also $p^e$-th powers, and $m$ is an element of this ideal if and only if at least one of these monomials divides $m$. The largest $p^e$-th power dividing $m$ is $y^{(p-1)p^e}$. Hence, it is enough to show that $y^{(p-1)p^e} \notin I_e(gf^{p^e-1})^{[p^e]}$, or equivalently, that $y^{p-1} \notin I_e(gf^{p^e-1})$. In view of Remark \ref{remarkI}, we look at terms in the product $gf^{p^e-1}$ that contribute something of the form $y^n$ to $I_e(gf^{p^e-1})$. A term does this if and only if the exponent for $x$ is strictly lower than $p^e$. In Equation \eqref{eq:gfpe-1} above, this happens for terms with index $i$ for which $i(p+1) + 1 \leq p^e - 1$, which is equivalent to
\[ i \leq \floor{\frac{p^e - 2}{p+1}} = \frac{p^e - p - 2}{p + 1}, \]
where we used again that $e$ is even. But for such indices $i$, the exponent for $y$ is given by
\[ (p^e - 1 - i)(p+1) \leq p^{e+1} + p^e - p - 1 - p^e + p + 2 = p^{e+1} + 1.\]
So the contribution of these terms to $I_e(gf^{p^e-1})$ is at least $y^p$. Thus the lowest exponent $n$ such that $y^n \in I_e(gf^{p^e-1})$ is $n = p$, and in particular $y^{p-1} \notin I_e(gf^{p^e-1})$. 
\end{proof}

\subsection{Some open questions}

\begin{quo}
The following questions are open, to the best of our knowledge.

\begin{enumerate}[(i)]

\item Does an algorithm exist which, on input polynomials $f$ and $g$, decides whether $\level (g,f)<\infty$? 

\item Under which conditions one can ensure that $\level (g,f)\leq\level (f)$?




\item In \cite[Proposition 6]{Fordham18}, it is shown that, if $R$
is an $F$--finite ring of characteristic $p\geq 3$, $f\in R$, and
$e$ is the largest $F$--jumping number of $f$ that lies inside
$(0,1)$, then $\level (f)=\lceil 1-\log_p (1-e)\rceil$. Is it possible
to obtain a similar result for $\level(g,f)$?

\end{enumerate}
\end{quo}

\bibliographystyle{alpha}
\bibliography{AFBoixReferences}

\end{document}